\newcommand{\la}{\lambda}
\newcommand{\N}{\mathbb N}
\newcommand{\XX}{\mathbb X}
\newcommand{\YY}{\mathbb Y}
\newcommand{\s}{\mathbb S}
\newcommand{\Ao}{\mathbf{A}}
\newcommand{\To}{\mathbf{T}}
\newcommand{\Po}{\mathbf{P}}
\newcommand{\Ro}{\mathbf R}
\newcommand{\Bo}{\mathbf{B}}
\newcommand{\reg}{r}
\newcommand{\eps}{\epsilon}
\def\plus{{\boldsymbol{\texttt{+}}}}
\newcommand\norm[1]{\Vert#1\Vert}
\newcommand\set[1]{{\{#1\}}}
\DeclareMathOperator*{\argmin}{arg\,min}
\DeclareMathOperator{\id}{Id}
\DeclareMathOperator{\fix}{Fix}
\newcommand{\signal}{x}
\newcommand{\data}{y}
\newcommand{\noise}{\xi}
\newtheorem{theorem}{Theorem}
\newtheorem{definition}[theorem]{Definiton}
\newtheorem{proposition}[theorem]{Proposition}
\newtheorem{remark}[theorem]{Remark}
\numberwithin{equation}{section}
\numberwithin{theorem}{section}
\title{Superiorized Regularization of Inverse Problems}
\date{}
\author{Aviv Gibali}
\affil{Department of Mathematics, ORT Braude College\authorcr
Karmiel 2161002, Israel
 \authorcr E-mail:  \texttt{avivg@braude.ac.il}
 }
\author{Markus Haltmeier}
\affil{Department of Mathematics, University of Innsbruck\authorcr
Technikerstrasse 13, 6020 Innsbruck, Austria
 \authorcr E-mail:  \texttt{markus.haltmeier@uibk.ac.at}
 }
\begin{document}

\maketitle

\begin{abstract}
Inverse problems are characterized by their inherent non-uniqueness and  sensitivity with respect
to data perturbations. Their stable solution  requires the application of regularization methods including  variational and iterative  regularization methods. Superiorization  is a heuristic approach that can steer basic iterative algorithms to have small value of certain regularization functional while keeping the algorithms simplicity and computational efforts, but is able to account for additional prior information. In this note, we combine the superiorization methodology with iterative regularization methods and show that the superiorized version of the scheme yields  again a regularization method, however accounting for different prior information.

\medskip\noindent\textbf{keywords}
Inverse problems; iterative regularization; superiorization; generalized inverse.

\end{abstract}

\section{Introduction}
\label{sec:intro}

Throughout this  paper, let $\XX$ and $\YY$ be Hilbert
spaces and let  $\Ao \colon \XX \to \YY$
be a bounded linear operator.
We study the stable solution of the inverse problem of estimating
the unknown $\signal \in \XX$ from data
\begin{equation} \label{eq:ip}
	 \data_\delta =  \Ao (\signal) + \noise_\delta  \,.
\end{equation}
Here $\noise_\delta$ models the noise (data error) that we assume to satisfy
the noise estimate $\norm{\noise_\delta} \leq \delta$ with noise level
$\delta  \geq 0$.
In particular, we are interested in the ill-posed case, where  solutions  of \eqref{eq:ip} are non-unique (even for exact data), are unstable with respect to data perturbations, or suffer from both issues. Several practically relevant  applications can be formulated in such form, including computed tomography, geophysical  imaging, or  nondestructive testing \cite{engl1996regularization,scherzer2009variational,natterer01mathematical}.
In order to account for the ill-posedness of \eqref{eq:ip} one has to apply regularization  methods, which enforce uniqueness
by using a  suitable  right inverse of $\Ao$,  and stabilize the inversion by relaxing the exact solution concept.

Classical regularization methods use approximations of the Moore-Penrose inverse, which is the right inverse defined by selecting the solution of $\Ao \signal = \data$ with { minimal Hilbert space norm.}  Various variational and iterative regularization methods fit this into this framework. In particular, the Landweber method  \cite{engl1996regularization,landweber1951iteration} which uses the iterative update
\begin{equation} \label{eq:landweber}
	 \signal_{k+1}^\delta   = \signal_k^\delta - \la \Ao^*(\Ao \signal_k^\delta - \data^\delta ) \,,
\end{equation}
{ ($\Ao^*$ denotes the adjoint of $\Ao$)} with starting value $\signal_0^\delta = 0$ is probably the most established iterative regularization method
based on the Moore-Penrose inverse. The Landweber method can be seen as gradient based iteration applied to the
least squares functional $\frac{1}{2} \norm{\Ao \signal - \data^\delta}^2$ combined with a suitable stopping rule in the case of noisy data.
For exact data $\data \in \Ao(\XX)$, the Landweber  iteration converges to the { Moore-Penrose inverse applied  to exact data,  which is equal to the minimal norm solution} $\Ao^\plus(\data) =  \argmin_\signal \{ \norm{\signal} \mid  \Ao \signal = \data \}$. For noisy data, the  iterates $ \signal_k^\delta$  define a regularization method in sense that together with a suitable  stopping rule $k = \kappa(\delta)$ we have
$\norm{\signal_{\kappa(\delta)}^\delta - \Ao^\plus (\data) } \to 0$ as $\delta \to 0$.

Minimal Hilbert space norm solutions, however,  are often not the relevant ones in practical application. For example, signals of interest may be
characterized by having small values of some regularizer $\reg \colon \XX \to [0, \infty]$ such  as the $\ell^1$-norm with respect to
a certain  frame, or the total variation  
{ (see, for example,   \cite{acar1994analysis,calvetti2018bayes,candes2002recovering,grasmair2011necessary}).}

Generalizing regularization methods based on the Moore-Penrose inverse consider approximations of
$\reg$-minimizing solutions, that { are elements} in $\argmin_\signal \{ \reg(\signal) \mid  \Ao \signal = \data \}$. However, iterative  algorithms implementing
such approaches are typically more complex than gradient type methods such  as \eqref{eq:landweber}.

In this paper, we therefore follow a different approach, where instead of iterative algorithms aiming for strictly minimizing
$ \reg(\signal)$ over the set of all solution of $\Ao \signal = \data$, we modify  \eqref{eq:landweber} such that it reduces the value of the regularizer opposed to the basic iteration.
More precisely, we define
\begin{equation}\label{eq:super}
\begin{cases}
\signal_{k+1/2}^\delta=\signal_k^\delta + t_k \Phi_k(\signal_k^\delta)\\
\signal_{k+1}^\delta= \signal_{k+1/2}^\delta  -\la  \Ao^*(\Ao \signal_{k+1/2}^\delta  - \data^\delta )  \,,
\end{cases}
\end{equation}
where $\Phi_k(\,\cdot\,)$ are continuous perturbations with bounded range and $t_k$ is a summable sequence of non-negative numbers.
The perturbations are chosen such that: $\reg(\signal_{k+1/2}^\delta) \leq  \reg(\signal_k^\delta)$, meaning that the perturbed update has { no larger} value of $\reg$ than the unperturbed update.
In that context one refers  to \eqref{eq:super} as the superiorized version of \eqref{eq:landweber}.

The concept of superiorization is first introduced (under a different name) in \cite{butnariu2007stable} and studied further by many authors is various fields and applications. A main ingredient in these works is the perturbation resilience in the case of exact data \cite{davidi2009perturbation,censor2019derivative}. For the state of current research on superiorization one can check the website \cite{CensorSupPage}. In particular, see \cite{censor2019superiorization,helou2018superiorization,GuoCui18,GuoZhao19,zibetti2018total}
for some recent papers on superiorization in the context of least squares minimization including \eqref{eq:landweber}.

The aim of this paper  is to analyze superiorization for ill-posed problems and to study its convergence behavior as $\delta \to 0$.
As we shall show, the iteration \eqref{eq:super} again defines a regularization method, which however does not approximate the Moore-Penrose inverse but a more general right inverse, that can be adapted desired properties of solutions of $\Ao \signal = \data$.
To the best of our knowledge superiorization has not been previously analyzed as regularization method. We have selected the Landweber method as prime iterative regularization method but are convinced that many other superiorized iterative algorithms can be shown to yield regularization methods.


\section{Preliminaries}\label{sec:preliminaries}


In this section, we recall the concepts of regularization methods,
perturbation resilience and the superiorization methodology in
the Hilbert space context. Moreover, we derive some auxiliary results
that we require for later purpose.

\subsection{Regularization methods}

Let $\Ro \colon \Ao(\XX) \subseteq \YY \to \XX$  be a right inverse of $\Ao$, which means any  possibly non-linear mapping  such that $\Ao \circ \Ro (\data) = \data$ for all $\data \in \Ao(\XX)$.

For exact data,  the selection of a particular right inverse defines a unique solution concept for
the equation $\Ao \signal = \data$.
However, as the following Proposition~\ref{prop:ill} states, in the case that $\Ao$ has non-closed range, then any right inverse is unbounded. For example,   any compact operator and has  non-closed range including integral operators arising in  typical inverse problems.
This is the reason for using  regularizations methods as stable approximations of right inverses.

\begin{proposition} \label{prop:ill}
If $\Ao(\XX)$ is non-closed, then $\Ro$ is discontinuous.
\end{proposition}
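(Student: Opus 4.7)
The plan is to argue the contrapositive: assuming $\Ro$ is continuous on its domain $\Ao(\XX)$, I will deduce that $\Ao(\XX)$ must be closed. In fact, it will suffice to use continuity at the single point $0 \in \Ao(\XX)$. The key reduction is the standard corollary of the open mapping theorem: the restriction $\Ao|_{(\ker \Ao)^\perp} \colon (\ker \Ao)^\perp \to \Ao(\XX)$ is a bounded linear bijection, so it has a bounded inverse precisely when $\Ao(\XX)$ is complete, i.e.\ closed. Consequently, $\Ao(\XX)$ non-closed is equivalent to the existence of a sequence $(x_n) \subset (\ker \Ao)^\perp$ with $\|x_n\| = 1$ and $\Ao x_n \to 0$.

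Granting this, the construction is immediate: set $y_n := \Ao x_n \in \Ao(\XX)$, so that $y_n \to 0 = \Ao(0)$ inside $\Ao(\XX)$. Continuity of $\Ro$ at $0$ would force $\Ro(y_n) \to \Ro(0)$ in $\XX$, and the goal becomes to refute this.

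The core of the argument uses the Hilbert structure to pin down the $(\ker \Ao)^\perp$-components of $\Ro(y_n)$ and $\Ro(0)$ without any linearity assumption on $\Ro$. From $\Ao \Ro(y_n) = y_n = \Ao x_n$ one has $\Ro(y_n) - x_n \in \ker \Ao$, so the orthogonal projection of $\Ro(y_n)$ onto $(\ker \Ao)^\perp$ equals $x_n$. Similarly $\Ao \Ro(0) = 0$ gives $\Ro(0) \in \ker \Ao$, hence its $(\ker \Ao)^\perp$-projection vanishes. Writing $\Ro(y_n) - \Ro(0)$ as the sum of its orthogonal components and invoking Pythagoras yields
\[
\|\Ro(y_n) - \Ro(0)\|^2 \;=\; \|x_n\|^2 \,+\, \|P_{\ker \Ao}(\Ro(y_n) - \Ro(0))\|^2 \;\geq\; 1,
\]
contradicting $\Ro(y_n) \to \Ro(0)$ and closing the proof.

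The only delicate point I foresee is that $\Ro$ is allowed to be nonlinear, so one cannot manipulate $\Ro(y_n) - \Ro(0)$ directly through $\Ro$. The projection identity above sidesteps this by extracting the $(\ker \Ao)^\perp$-part of a right-inverse image using only the defining relation $\Ao \Ro(y) = y$ and the orthogonal splitting $\XX = \ker \Ao \oplus (\ker \Ao)^\perp$; this is precisely where the Hilbert setting is used in an essential way.
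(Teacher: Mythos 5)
Your argument is correct and follows essentially the same route as the paper's proof: both hinge on the observation that the $(\ker \Ao)^\perp$-component of $\Ro(\data)$ is forced by the relation $\Ao\Ro(\data)=\data$ to equal $\Ao^\plus(\data)$, so that continuity of $\Ro$ would transfer to $\Ao^\plus$. The only difference is that where the paper cites the standard fact that continuity of $\Ao^\plus$ implies closedness of $\Ao(\XX)$, you prove it on the spot by extracting a unit-norm sequence $(x_n)$ in $(\ker\Ao)^\perp$ with $\Ao x_n \to 0$ via the open mapping theorem, which makes the argument self-contained but does not change its substance.
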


\begin{proof}
Every solution the equation   $\Ao\signal = \data$ has  the form $\Ao^\plus (\data) + z$ where $z \in \ker(\Ao)$. In particular, we have $\Po_{\ker(\Ao)^\bot }\Ro = \Ao^\plus$. This shows that the the continuity   of $\Ro$ is   implies the continuity of $\Ao^\plus$. However, the continuity of $\Ao^\plus$
 implies  the closeness  of $\Ao(\XX)$ (see for example \cite{engl1996regularization}) and concludes the proof.
\end{proof}

Hence in the  ill-posed setting where $\Ao$ has non-closed range,  any
right inverse is discontinuous. For the stable solution of such inverse problems
one has to  apply regularization methods that are defined as follows.

\begin{definition}[Regularization method]\label{def:regmeth1}
A family  $(\Ro_k)_{k>0}$ of continuous operators $\Ro_k\colon \YY\rightarrow \XX$
is called  regularization of $\Ro$ if for all $\data \in \Ao(\XX)$
there exists a mapping $\kappa \colon (0,\infty)  \rightarrow \N$   with
$\kappa(\delta) \to \infty$ as $\delta \to 0$ and
\begin{equation}
\lim_{\delta\rightarrow 0} \sup \Bigl\{\| \Ro (\data) -\Ro_{\kappa(\delta)}(\data^\delta) \| \mid \data^\delta \in \YY \wedge  \|\data^\delta-\Ao \signal\|\leq \delta\Bigr\} = 0 \,.
\end{equation}
In this case we call $\kappa$ (a-priori) parameter choice rule  and  the pair $((\Ro_k)_{k \in \N},\kappa)$  a regularization method for the solution of $\Ao\signal = \data$.
\end{definition}

Classical regularization methods are adapted to the Moore-Penrose inverse $\Ao^\plus$  where $\Ao^\plus(\data)$ for $\data \in \Ao (\XX)$ is defined as the unique solution  of
$\Ao \signal = \data$ with minimal norm.  This  includes  classical Tikhonov regularization and classical iterative regularization methods. In particular, the Landweber iteration
\eqref{eq:landweber}  together  with a suitable stopping rule is known to be a { regularization method}.  In this work we generalize these results by integrating the
superiorization to the Landweber iteration and thereby adapting
to more flexible right inverses.
The following lemma gives a useful  guideline for creating regularization methods.

 \begin{proposition}[Point-wise approximations are  regularizations]\label{prop:point}
Let $(\Ro_k)_{k>0}$ be a family of continuous operators $\Ro_k\colon \YY\rightarrow \XX$ that converge point-wise to $\Ro$ on $\Ao(\XX)$. Then $(\Ro_k)_{k>0}$ is a regularization
of  $\Ro$.
\end{proposition}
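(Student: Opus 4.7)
The plan is to show point-wise convergence plus continuity of each $\Ro_k$ yields a uniform convergence statement of the required form, by constructing a parameter choice rule $\kappa$ that tends to infinity \emph{slowly enough} relative to how fast the continuity modulus of $\Ro_k$ degrades as $k$ grows. Fix $\data \in \Ao(\XX)$. The starting point is the triangle inequality
\begin{equation*}
\|\Ro(\data) - \Ro_{\kappa(\delta)}(\data^\delta)\|
\;\leq\;
\underbrace{\|\Ro(\data) - \Ro_{\kappa(\delta)}(\data)\|}_{\text{(I)}}
\;+\;
\underbrace{\|\Ro_{\kappa(\delta)}(\data) - \Ro_{\kappa(\delta)}(\data^\delta)\|}_{\text{(II)}} ,
\end{equation*}
where (I) is controlled by point-wise convergence as soon as $\kappa(\delta) \to \infty$, and (II) is controlled by continuity of the single operator $\Ro_{\kappa(\delta)}$ at $\data$, once we know how small the noise $\|\data^\delta - \data\|\leq \delta$ must be relative to $\kappa(\delta)$.

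Next I would introduce the continuity modulus $\omega_k(\delta):=\sup\{\|\Ro_k(\data)-\Ro_k(\data^\delta)\|:\data^\delta\in\YY,\,\|\data^\delta-\data\|\leq\delta\}$. Since $\Ro_k$ is continuous at $\data$, $\omega_k(\delta)\to 0$ as $\delta\to 0$ for every fixed $k$. I would now run a diagonal argument: for each $n\in\N$ choose $\delta_n>0$ such that $\omega_n(\delta)\leq 1/n$ for every $\delta\leq \delta_n$, and arrange (by taking successive minima) that the sequence $(\delta_n)$ is strictly decreasing to $0$. Define $\kappa(\delta):=n$ whenever $\delta\in(\delta_{n+1},\delta_n]$, and $\kappa(\delta):=1$ for $\delta>\delta_1$. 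By construction $\kappa(\delta)\to\infty$ as $\delta\to 0$, and $\omega_{\kappa(\delta)}(\delta)\leq 1/\kappa(\delta)\to 0$, so (II) converges to zero uniformly over all admissible $\data^\delta$.

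Finally, term (I) depends only on $\data$ and $\kappa(\delta)$, not on $\data^\delta$, so by point-wise convergence of $\Ro_k(\data)\to\Ro(\data)$ it also tends to zero. Combining the two estimates, the supremum in Definition~\ref{def:regmeth1} is bounded by a quantity independent of $\data^\delta$ which vanishes as $\delta\to 0$, proving that $((\Ro_k)_{k>0},\kappa)$ is a regularization method for $\Ro$.

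The only genuine subtlety is the diagonal construction of $\kappa$: it must grow to infinity but not faster than the worst of the continuity moduli $\omega_k$ allows. This is a standard but careful step, and once the thresholds $\delta_n$ are arranged to be monotone and accumulate only at $0$, the rest of the argument is a routine triangle inequality. Note that $\kappa$ may depend on $\data$; the definition indeed permits this, so no uniformity over $\data\in\Ao(\XX)$ needs to be chased.
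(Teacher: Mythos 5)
Your proposal is correct and follows essentially the same route as the paper's proof: the identical triangle-inequality split into a pointwise-approximation term at the exact datum $\data$ and a stability term controlled by continuity of the single operator $\Ro_{\kappa(\delta)}$, together with a parameter choice rule built to grow slowly enough that the continuity moduli stay under control. The only difference is organizational --- you realize $\kappa$ as a step function over a decreasing sequence of thresholds $\delta_n$ (a slightly more explicit construction than the paper's ``without loss of generality $\tau$ is strictly increasing and continuous'' followed by $\kappa = k\circ\tau^{-1}$), which does the same balancing.
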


\begin{proof}
We follow the proof given in \cite{engl1996regularization} for the special case $\Ro = \Ao^\plus$.
Let $\data \in \Ao(\XX)$. For  any $\eps > 0$ choose
$k(\eps)$ such    $\norm{\Ro_{k(\eps)}(\data) - \Ro(\data)} \leq \eps/2$.
Moreover, choose $\tau(\eps)$ such that for all $z \in \YY$ with
$\norm{\data - z}  \leq  \tau(\eps)$ we have
$\norm{ { \Ro_{k(\eps)}(\data)} - \Ro_{k(\eps)}(z)} \leq \eps/2$.
Without loss of generality we can assume that $\tau(\eps)$ is strictly increasing and continuous with
$\tau(0+)=0$.  We define  $\kappa \coloneqq k \circ \tau^{-1}$. Then, for every $\delta >0$
and $\norm{\data - \data^\delta}  \leq  \delta$ we have
\begin{align*} 
\norm{\Ro_{\kappa(\delta)}(\data^\delta) - \Ro(\data)}
&\leq
\norm{\Ro_{\kappa(\delta)}(\data) - \Ro(\data)}
+
\norm{\Ro_{\kappa(\delta)}(\data) - \Ro_{\kappa(\delta)}(\data^\delta)} \\
&=
\norm{\Ro_{k \circ \tau^{-1}(\delta)}(\data) - \Ro(\data)}
+
\norm{\Ro_{k \circ \tau^{-1}(\delta)}(\data) - \Ro_{k \circ \tau^{-1}(\delta)}(\data^\delta)} \\
&\leq
{\tau^{-1}(\delta)}/{2} + {\tau^{-1}(\delta)}/{2}  = \tau^{-1}(\delta)\,.
\end{align*}
Because $\tau^{-1}(\delta) \to 0$ as $\delta \to 0$ this completes the proof.
\end{proof}

Proposition~\ref{prop:point} will be used to show that superiorized Landweber method
is a regularization method.

\subsection{Superiorization}

Let { $\s \subseteq \XX$ be a given set, $\To \colon \XX \to \XX$ be an algorithmic operator for the problem of finding  elements in $\s$, and suppose the basic iteration
 \begin{equation} \label{eq:basic}
  \signal_{k+1}  = \To(\signal_k)
  \end{equation}
converges for all  $x_0 \in \XX$ to some element in $\s$.} The superiorization methodology  modifies
\eqref{eq:basic} such  that the resulting iteration still converges, however with a limit having
{ lower or equal value} of some functional $\reg \colon \XX \to [0, \infty]$. Superiorization comes with several  benefits. First, limits having small value of $\reg$ are often closer to elements   than the original limits of  \eqref{eq:basic}. Second, one can make use of existing algorithms in the form of \eqref{eq:basic}. Third, opposed to optimization problems that strictly  minimize the functional $\reg$, superiorized algorithms are often simpler and more efficient to implement.  In this context we also mention that exactly minimizing  the regularizer $\reg$ is  anyway not strictly required in many inverse problems because   the selection of the regularizer itself is often somehow heuristic.\\

The superiorized version of   \eqref{eq:basic} is defined as follows.

\begin{definition}\label{def:super}
Let $\s \subseteq \XX$, let  \eqref{eq:basic}
 for all  $x_0 \in \XX$ converge to an element in $\s$, and let
$\reg \colon \XX \to [0,\infty]$ be  convex and subdifferentiable.
One calls  $\signal_{k+1}  = \To(\signal_k + t_k d_k)  $,
where $t_k \geq 0$ with $\sum_{k\in \N} t_k < \infty $, and
\begin{equation} \label{eq:dk}
d_k :=
\begin{cases}
- {D_k}/{\norm{D_k}} & \text{ if } D_k \neq 0
\\
0 & \text{ otherwise\,, }
\end{cases}
\end{equation}
with $D_k \in  \partial \reg (x_k)$,
the  superiorized version of \eqref{eq:basic}.
\end{definition}

In Definition \ref{def:super} we used the superiorization strategy based on the subgradient. This approach can be generalized by using further superiorization strategies, such as derivatives free techniques, see { also \cite{censor2019superiorization}} in the context of least squares minimization. { Note that for the presented convergence analysis we assume  that perturbations having the  form $d_k  = \Phi_k (x_k)$ with  continuous  $\Phi_k(\, \cdot\,)$. This can be achieved  
 by  smoothing  the normalization  procedure in \eqref{eq:dk} 
 around zero.}\\

General questions concerning the superiorization methodology are the following:
\begin{enumerate}[label=(\alph*)]
\item\label{s1} Does  the superiorized iteration  converge?
\item\label{s2} Is the  limit $\hat \signal$ contained  $\s$?
\item\label{s3} Is  $\reg(\hat \signal)$ { not larger than}  $\reg$ evaluated at limit of the basic sequence?\\
\end{enumerate}

{ 
\noindent The underlying concept addressing  the  issues 
\ref{s1}, \ref{s2}  is perturbation resilience that is defined next. 
That limit points satisfy \ref{s3}  is referred  to as the  guarantee problem of the superiorization methodology  and could not be proven until today.}

\begin{definition}
Let  $\s \subseteq \XX$ and suppose { that for all initial values $x_0 \in \XX$ the  sequence \eqref{eq:basic} converges} to some element in $\s$.
Iteration \eqref{eq:basic} is called bounded perturbation resilient (with respect to $\s$), if for all  $x_0 \in \XX$, all   $(t_k)_{k\in \N}  \in [0, \infty)^\N$ with $\sum_{k\in \N} t_k < \infty$ and  all bounded sequences $(d_k)_{k\in \N} \in \XX^\N$, the perturbed iteration
 \begin{equation} \label{eq:super}
 	\signal_{k+1}  = \To(\signal_k + t_k d_k )
\end{equation}
converges to some element  in $\s$.
\end{definition}

The following result which is a direct consequence  of \cite[Theorem~5]{butnariu2006convergence}  shows that the first two questions in the above list can be positively answered for a wide class of basic iterations.

\begin{proposition} \label{prop:perturbed}
Let $\To$ be non-expansive and all basic iterates \eqref{eq:basic} converge strongly to some
element $\fix(\To) \neq \emptyset$. Then \eqref{eq:basic} is bounded perturbation resilient.
\end{proposition}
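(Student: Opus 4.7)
The plan is to recast the perturbed iterates $(x_k)$ as an inexact orbit of $\To$ with absolutely summable one-step errors, at which point the assertion becomes a direct application of \cite[Theorem~5]{butnariu2006convergence}.

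Concretely, set $M := \sup_{k \in \N} \norm{d_k}$, which is finite by assumption, and let $z_k := x_k + t_k d_k$, so that $x_{k+1} = \To(z_k)$. Non-expansiveness of $\To$ gives
\[
\norm{x_{k+1} - \To(x_k)} = \norm{\To(z_k) - \To(x_k)} \leq \norm{z_k - x_k} = t_k \norm{d_k} \leq M t_k,
\]
and summability of $(t_k)$ yields $\sum_{k \in \N} \norm{x_{k+1} - \To(x_k)} < \infty$. The remaining hypotheses required by the cited theorem, namely that $\To$ is non-expansive, $\fix(\To) \neq \emptyset$, and every exact orbit $(\To^k(x_0))_k$ converges strongly to a fixed point, are precisely the standing assumptions of the proposition. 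Invoking \cite[Theorem~5]{butnariu2006convergence} therefore produces some $\hat x \in \fix(\To)$ with $x_k \to \hat x$, which is bounded perturbation resilience since $x_0$, $(t_k)$, and $(d_k)$ were arbitrary.

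For a more self-contained version, one would proceed in three stages. First, quasi-Fejér monotonicity, $\norm{x_{k+1} - p} \leq \norm{x_k - p} + M t_k$ for every $p \in \fix(\To)$, yields boundedness of $(x_k)$ and convergence of each sequence $\norm{x_k - p}$. Second, the limit map $L(x) := \lim_{n} \To^n(x)$ is well-defined by hypothesis, is non-expansive as a pointwise limit of non-expansive maps, and satisfies $L \circ \To = L$; consequently $\norm{L(x_{k+1}) - L(x_k)} \leq M t_k$, so $(L(x_k))$ is Cauchy with some limit $p^* \in \fix(\To)$. The main obstacle, and the core content that one imports from \cite{butnariu2006convergence}, is the last step: showing that $\norm{x_k - L(x_k)} \to 0$ and hence $x_k \to p^*$ strongly. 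This is the step where the full force of the hypothesis that every exact orbit converges strongly is needed, since non-expansiveness alone cannot control the displacement between $x_k$ and the moving target $L(x_k)$.
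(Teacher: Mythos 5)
Your argument is exactly the one the paper gives: bound the one-step residual $\norm{x_{k+1}-\To(x_k)}\leq \sup_k\norm{d_k}\, t_k$ via non-expansiveness, conclude summability, and invoke \cite[Theorem~5]{butnariu2006convergence}. The additional sketch of a self-contained proof is a bonus but does not change that the core reasoning coincides with the paper's.
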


\begin{proof}
Under the given assumptions,      \cite[Theorem~5]{butnariu2006convergence} states that
any  sequence  $(z_k)_{k \in \N}$ satisfying
$\sum_{k \in \N} \norm{z_{k+1} - \To(z_k )}<  \infty$ converges to a fixed point of $\To$.
Because of the  non-expansiveness of $\To$, the iteration \eqref{eq:super} satisfies
$\sum_{k \in \N}  \norm{x_{k+1} - \To(x_k )} \leq   \sup_k \norm{d_k} \sum_{k \in \N}  t_k$ which allows applying above mentioned result.
\end{proof}

We will apply Proposition \ref{prop:perturbed} to show the perturbation resilience of the Landweber iteration for exact data. In that context, we will also discuss regularizing properties of the superiorized Landweber iteration for noisy data, which, to the best of our knowledge, has not been investigated so far.

%
%

\section{Convergence analysis}
\label{sec:conv}

Recall that   $\Ao \colon \XX \to \YY$ is a  bounded linear  operator and choose  $\lambda \in  (0, 1/\norm{\Ao}^2)$.
Moreover, let  $\Phi_k\colon \XX \to \XX$ be a family of  continuous mappings with bounded range
and let $(t_k)_{k \in \N}$ be a summable sequence of nonnegative numbers.

{ We investigate the perturbed Landweber iteration for possibly noisy data that is defined as follows:
\begin{equation}\label{eq:super-landweber}
\begin{cases}
\signal_{k+1/2}^\delta = \signal_k^\delta  +  t_k \Phi_k(\signal_k^\delta)\\
\signal_{k+1}^\delta    =\signal_{k+1/2}^\delta - \la \Ao^*(\Ao \signal_{k+1/2}^\delta - \data^\delta) \,,
\end{cases}
\end{equation}
with initial data $\signal_0^\delta = 0$.} For properties of the Landweber operator the readers are referred to the work of \cite{cegielski14}. The index $\delta >0$ stands for the noise level and the given data $\data^\delta$ satisfy the estimate $\norm{\data - \data^\delta} \leq \delta$ with $\data \in \Ao(\XX)$. To indicate the dependence  of the iterates on the given data we write $\Bo_k(\data^\delta) \coloneqq \signal_k^\delta $.

\subsection{Exact data}
\label{sec:exact}

Our aim is to show that \eqref{eq:super-landweber} defines a regularization method. We first start with the convergence in the exact data case.

\begin{theorem}[Convergence for exact data]\label{thm:exact}
The Landweber iteration \eqref{eq:landweber} is perturbation resilient. That is, for  all
  $x_0 \in \XX$, the  perturbed Landweber iteration with exact data \eqref{eq:super-landweber} converges to a solution of  the equation $\Ao(\signal) = \data$. In particular, the limits of the iteration \eqref{eq:super-landweber} define a right inverse  $\Bo \colon \Ao(\XX) \to \YY \colon \data \mapsto  \lim_{k \to \infty} \Bo_k(\data) $ of $\Ao$.
\end{theorem}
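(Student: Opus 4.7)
The plan is to recast the exact-data version of \eqref{eq:super-landweber} into the abstract perturbed-iteration format $\signal_{k+1} = \To(\signal_k + t_k d_k)$ with $\To(\signal) \coloneqq \signal - \la \Ao^*(\Ao \signal - \data)$ the Landweber operator and $d_k \coloneqq \Phi_k(\signal_k)$, and then to invoke Proposition~\ref{prop:perturbed}. To apply that proposition I must check three ingredients: (i) non-expansiveness of $\To$, (ii) strong convergence of the unperturbed Landweber iterates for every starting value $\signal_0 \in \XX$ to some element of $\fix(\To)$, and (iii) uniform boundedness of the perturbations $(d_k)_{k \in \N}$.

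For (i), compute $\To(\signal) - \To(\zsignal) = (\id - \la \Ao^*\Ao)(\signal - \zsignal)$. Since $\Ao^*\Ao$ is self-adjoint and positive with $\norm{\Ao^*\Ao} = \norm{\Ao}^2 < 1/\la$, the spectrum of $\id - \la \Ao^*\Ao$ is contained in $[0,1]$, so its operator norm is at most $1$ and $\To$ is non-expansive. For (ii), the strong convergence of the classical Landweber iteration started at an arbitrary $\signal_0 \in \XX$ to a solution of $\Ao \signal = \data$, valid whenever $\data \in \Ao(\XX)$, is a standard result (see \cite{engl1996regularization}). I will also verify that $\fix(\To)$ coincides with the solution set of $\Ao \signal = \data$: from $\To(\signal) = \signal$ one gets $\Ao^*(\Ao \signal - \data) = 0$, i.e., $\Ao \signal - \data \in \ker(\Ao^*) = \range(\Ao)^\bot$; combined with $\Ao \signal - \data \in \range(\Ao)$ (which holds because $\data \in \Ao(\XX)$), this forces $\Ao \signal = \data$. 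For (iii), the standing hypothesis that each $\Phi_k$ has bounded range, read uniformly in $k$, gives $\sup_{k \in \N} \norm{d_k} < \infty$, and together with $\sum_k t_k < \infty$ this is what Proposition~\ref{prop:perturbed} needs.

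Having (i)--(iii) in hand, Proposition~\ref{prop:perturbed} yields strong convergence of the perturbed iterates $\Bo_k(\data)$ to some $\hat \signal \in \fix(\To)$, which by the identification above solves $\Ao \signal = \data$. Defining $\Bo(\data) \coloneqq \lim_{k \to \infty} \Bo_k(\data)$ therefore gives a well-defined mapping $\Ao(\XX) \to \XX$ with $\Ao \circ \Bo = \id$ on $\Ao(\XX)$, i.e., a right inverse of $\Ao$ as claimed.

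The only mildly delicate step is (iii): the boundedness hypothesis on the $\Phi_k$ must be used in its uniform form so that $\sup_k \norm{d_k} < \infty$ feeds into the summability estimate $\sum_k \norm{\signal_{k+1} - \To(\signal_k)} \leq \sup_k \norm{d_k} \sum_k t_k < \infty$ in the proof of Proposition~\ref{prop:perturbed}. The remaining ingredients, (i) and (ii), are structural and follow from the choice $\la \in (0, 1/\norm{\Ao}^2)$ together with classical Landweber theory.
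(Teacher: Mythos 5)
Your proof is correct and follows essentially the same route as the paper's: both recast the exact-data iteration as a perturbed fixed-point iteration for the non-expansive Landweber operator $\To$, identify $\fix(\To)$ with the solution set of $\Ao\signal=\data$ using $\data\in\Ao(\XX)$, and conclude via Proposition~\ref{prop:perturbed}. Your extra care in spelling out $\ker(\Ao^*)=\range(\Ao)^\bot$ and in noting that the bounded-range hypothesis on the $\Phi_k$ must be used uniformly in $k$ are welcome refinements of details the paper leaves implicit, but not a different argument.
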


\begin{proof}
The basic Landweber iteration  is known to strongly converge to a solution
 of $\Ao(\signal) = \data$ (see for { example \cite[Theorem 6.1]{engl1996regularization}}). Moreover, note that \eqref{eq:super-landweber} is a perturbed fixed point iteration with the operator
 $ \To(\signal) = (\id - \la \Ao^* \Ao) (\signal) + \Ao^*\data$.  We have $ \norm{\To(\signal)-\To( \signal_0)} \leq  \norm{\id - \la \Ao^* \Ao} \norm{\signal-\signal_0} \leq \norm{\signal-\signal_0}$. Hence $\To$ is non-expansive and because $\data \in \Ao(\XX)$, we have
 $\fix(\To) = \set{\signal \mid \Ao \signal = \data}$. Therefore  the claim follows from Proposition
 \ref{prop:perturbed}.
\end{proof}

%

\subsection{Noisy  data}
\label{sec:noisy}

Proposition~\ref{prop:ill} implies that in the ill-posed case where $\Ao(\XX)$ is non-closed,
the right inverse $\Bo$ { defined by} the perturbed Landweber exact data iteration \eqref{eq:super-landweber}
is discontinuous. Therefore, it has to be regularized. Following the  iterative regularization strategy,
the  regularization we use in this paper comes from early stopping the  noisy data iteration.
Recall that  we write $\Bo_k(\data^\delta) = \signal_k^\delta$  for the iterates defined in \eqref{eq:super-landweber}, defining mappings $\Bo_k\colon \YY \to \XX$.

\begin{theorem}[Convergence for noisy data]\label{thm:noisy}
For  all $\data \in \Ao(\XX)$  there exists a parameter choice rule $\kappa \colon (0, \infty) \to \N$ such that the pair  $((\Bo_k)_{k\in \N}, \kappa) $
is a regularization  method for the solution of  $\Ao \signal =  \data$ adapted to $\Bo$.
In particular,
\begin{equation}\label{eq:conv-nioisy}
\lim_{\delta \to 0}    \norm{\Bo (\data) - \signal_{\kappa(\delta)}^\delta }
= 0
\end{equation}
 and all  families of noisy data $(\data^\delta)_{\delta>0} $
with  $\norm{\data-\data^\delta} \leq \delta$.
Moreover,  the truncated iterates  form a regularization $(\Bo_k)_{k \in \N}$ of $\Bo$.
\end{theorem}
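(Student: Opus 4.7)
The plan is to reduce the statement to Proposition~\ref{prop:point}: once each $\Bo_k\colon\YY\to\XX$ is shown to be continuous and the point-wise convergence $\Bo_k(\data)\to\Bo(\data)$ is established for $\data\in\Ao(\XX)$, the existence of a suitable parameter choice rule $\kappa$ and the regularization property \eqref{eq:conv-nioisy} follow immediately from that proposition. So the work reduces to verifying those two hypotheses.

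First I would verify continuity of $\Bo_k$ as a map of the data by induction on $k$. The base case is trivial since $\Bo_0(\data^\delta)=\signal_0^\delta=0$ is constant in $\data^\delta$. For the inductive step, assume $\Bo_k$ is continuous. The half-step in \eqref{eq:super-landweber} writes $\Bo_k(\data^\delta)+t_k\Phi_k(\Bo_k(\data^\delta))$, which is continuous in $\data^\delta$ because $\Phi_k$ is continuous by assumption. The subsequent Landweber step $z\mapsto z-\la\Ao^*(\Ao z-\data^\delta)$ is jointly continuous (affine and linear in $z$ and $\data^\delta$ respectively and $\Ao^*\Ao$ is bounded). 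Composing the two continuous maps yields continuity of $\Bo_{k+1}$, closing the induction.

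Second, point-wise convergence on $\Ao(\XX)$ is exactly what Theorem~\ref{thm:exact} delivers: for every $\data\in\Ao(\XX)$ the sequence $\Bo_k(\data)$ converges in $\XX$ to $\Bo(\data)$, by construction of $\Bo$ as the limit of the exact-data iteration.

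With these two ingredients in place, the hypotheses of Proposition~\ref{prop:point} are satisfied, so $(\Bo_k)_{k\in\N}$ is a regularization of $\Bo$; the proof of that proposition furthermore constructs an explicit parameter choice rule $\kappa$ as the composition of the pointwise-convergence index with the inverse of a modulus of continuity, yielding \eqref{eq:conv-nioisy} uniformly over $\set{\data^\delta\in\YY\mid\norm{\data-\data^\delta}\le\delta}$. I do not expect any substantial obstacle: both pieces come essentially for free from the preliminaries and from Theorem~\ref{thm:exact}. The only point that demands a moment of care is that the perturbations $\Phi_k$ entering \eqref{eq:super-landweber} are assumed continuous and with bounded range, which is precisely what is needed to propagate continuity through the recurrence; boundedness of the range plays no role here but was already used in Theorem~\ref{thm:exact} to control the perturbation series.
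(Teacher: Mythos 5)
Your proposal is correct and follows essentially the same route as the paper: the paper's proof likewise invokes Theorem~\ref{thm:exact} for the point-wise convergence $\Bo_k(\data)\to\Bo(\data)$ on $\Ao(\XX)$, notes that continuity of each $\Bo_k$ follows inductively, and then applies Proposition~\ref{prop:point}. Your write-up merely spells out the induction step in more detail than the paper does.
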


\begin{proof}
For exact data, according to Theorem~\ref{thm:exact}, we have $\lim_{k \to \infty} \Bo_k(\data)
= \Bo(\data)$ pointwise
for  $\data \in \Ao(\XX)$. Moreover, in an inductive manner one verfies that the mappings   $\Bo_k \colon \YY \to \XX$ are  continuous for all $k \in \N$. Proposition~\ref{prop:point} therefore implies the existence  of
$\kappa \colon (0, \infty) \to \N$ such that the pair $((\Bo_k)_{k\in \N}, \kappa) $
is a regularization  method for the solution of  $\Ao \signal =  \data$ adapted to $\Bo$.
In particular, \eqref{eq:conv-nioisy}  holds and $(\Bo_k)_{k \in \N}$ is a regularization of $\Bo$.
\end{proof}

\begin{remark}
Theorem \ref{thm:noisy} shows that there exists a parameter choice rule which yield a regularization method, but this is not given explicitly. In order to define this parameter choice rule, further investigation and additional prior information is needed; for example one can explore the following strategies.
\begin{itemize}
  \item Estimate the Lipschitz  constant of $B_k$ to find a-priori rules
  $k = \kappa(\delta)$.
  \item Show that the discrepancy principle gives a-posteriori rule $k = \kappa(\delta,y^\delta)$.
\end{itemize}
For the second strategy, relations between the discrepancy principle  and the concept of
strong perturbation resilience might be useful~\cite{herman2012superiorization}.
\end{remark}

\section{Conclusion}
\label{sec:conclusion}

In this paper we showed that superiorization concept applied to the Landweber method gives a regularization method for the solution of inverse problems. Basically, our main result states that truncating the superiorized (or perturbed) Landweber iteration depending on the noise level, is stable and convergent in the limit $\delta \to 0$.
To the best of our knowledge, such regularization properties have not been investigated previously for the superiorization methodology.

However, many relevant questions following this work regarding superiorization and regularization remain open and call for further investigations.

\begin{enumerate}[parsep=0em, label=\arabic*:]
\item
Explicit  parameter choice rules must  be derived.

\item
For the Landweber (and many  related  regularization techniques), the discrepancy principle which chooses the first index with $\norm{\Ao \signal_k^\delta - \data^\delta} \leq  c \delta$ yields an
 admissible parameter choice. It is unclear if a similar parameter choice  rule for the superiorized Landweber iteration exists.

\item
The Landweber  method is often quite slow. Accelerated iterative regularization might be investigated in combination with the superiorization technique.

\item
Convergence rates (quantitative estimates between the exact solution $\Bo \data$ for exact data and  regularized solutions $\Bo_k \data^\delta$ for noisy data) are well established for the Landweber iteration. Deriving such rates for the superiorized version  seems  a difficult issue.
In that context characterizing  $\Bo \data$ might be useful.

\item  Superiorization in the context of nonlinear inverse problems is another aspect that we have not touched.
\end{enumerate}

Investigating such issues are interesting lines of future research.


\section*{Acknowledgements}
The work of M. Haltmeier has been supported by the Austrian Science Fund (FWF), project P 30747-N32.

\end{document}